\theoremstyle{plain}
\newtheorem{theorem}{Theorem}[section]
\newtheorem{conjecture}[theorem]{Conjecture}
\newtheorem{corollary}[theorem]{Corollary}
\theoremstyle{definition}
\def\img#1{\mathrm{Im}(#1)}
\def\aut#1{{\mathrm{Aut}(#1)}}
\def\Z{\mathbb Z}
\def\F{\mathbb F}
\def\Q{\mathcal Q}
\def\vv#1#2{\left(\begin{matrix} #1 \\ #2\end{matrix}\right)}
\def\m#1#2#3#4{\left(\begin{smallmatrix} #1 & #2 \\ #3 & #4\end{smallmatrix}\right)}
\def\mm#1#2#3#4{\left(\begin{matrix} #1 & #2 \\ #3 & #4\end{matrix}\right)}
\title{Medial quasigroups of prime square order}
\author{David Stanovsk\'y}
\address{Department of Algebra, Faculty of Mathematics and Physics, Charles University, Prague, Czech Republic}
\email{stanovsk@karlin.mff.cuni.cz}
\begin{document}

\thanks{Research partly supported by the GA\v CR grant 13-01832S}

\keywords{Medial quasigroup, quasigroup affine over abelian group, classification of quasigroups, enumeration of quasigroups.}

\subjclass[2000]{20N05, 05A15}

\begin{abstract}
We prove that, for any prime $p$, there are precisely $2p^4-p^3-p^2-3p-1$ medial quasigroups of order $p^2$, up to isomorphism.
\end{abstract}

\maketitle

\section{Introduction}\label{sec:intro}

\emph{Medial quasigroups}, i.e., quasigroups satisfying the medial law
\begin{displaymath}
    (x*y)*(u*v)=(x*u)*(y*v),
\end{displaymath}
are one of the classical subjects of quasigroup theory. Yet there are very few enumeration results in literature. The aim of the present paper is to extend earlier results of \cite{Kir,SoS,SV}, by enumerating medial quasigroups of prime square order. 

The fundamental tool to study medial quasigroups (and many other classes of quasigroups), is affine representation.
Given an abelian group $G=(G,+)$, automorphisms $\varphi,\psi$ of $G$, and an element $c\in G$, define a new operation $*$ on the set $G$ by
\begin{displaymath}
    x*y = \varphi(x)+\psi(y)+c.
\end{displaymath}
The resulting quasigroup $(G,*)$ is said to be \emph{affine over the group} $G$, and it will be denoted by $\Q(G,+,\varphi,\psi,c)$; the quintuple $(G,+,\varphi,\psi,c)$ is called an \emph{affine form} of $(G,*)$.
The fundamental Toyoda-Bruck theorem \cite[Theorem 3.1]{Sta-latin} states that a quasigroup is medial if and only if there is an abelian group $G=(G,+)$, a pair of \emph{commuting} automorphisms $\varphi,\psi$ of $G$, and $c\in G$ such that $Q=\Q(G,+,\varphi,\psi,c)$. We refer to \cite{Sta-latin} for a detailed account on various kinds of affine representations of quasigroups.

Let $mq(n)$ denote the number of medial quasigroups of order $n$, and $mq(G)$ the number of medial quasigroups that admit an affine form over a group $G$, up to isomorphism. It follows from the classification of finite abelian groups that $mq(G\times H)=mq(G)\cdot mq(H)$ whenever $G,H$ are abelian groups of coprime order. Therefore, the function $mq(n)$ is multiplicative. In particular, if $n=p_1^{k_1}\cdot\ldots\cdot p_m^{k_m}$ is a prime factorization of $n$, then $mq(n)=mq(p_1^{k_1})\cdot\ldots\cdot mq(p_m^{k_m})$. 
Since isotopic groups are isomorphic, a medial quasigroup cannot admit affine forms over two non-isomorphic groups, and thus $mq(n)=\sum mq(G)$ where the sum runs over all isomorphism representatives of abelian groups of order $n$. 
For details, we refer to \cite{SV}.

Quasigroups affine over a cyclic group were enumerated in \cite{Kir,SV}, obtaining an explicit formula
\[mq(\Z_{p^k})=p^{2k} + p^{2k-2} - p^{k-1} - \sum_{i=k-1}^{2k-1} p^i\]
for every prime $p$. In particular, we have 
\begin{align*}
mq(p)=mq(\Z_p)&=p^2-p-1\\
mq(\Z_{p^2})&=p^4-p^3-2p.
\end{align*}
The main result of the present paper is:

\begin{theorem} \label{Th:Zp2} 
$mq(\Z_p^2)=p^4-p^2-p-1$ for every prime $p$.
\end{theorem}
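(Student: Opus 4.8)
The plan is to translate the count into a question about commuting pairs of $2\times2$ matrices over $\F_p$ and to resolve that question by a case analysis on the algebra such a pair generates. Recall that every isomorphism between quasigroups affine over abelian groups is an affine map of the underlying groups; together with the Toyoda--Bruck theorem this yields the following working description of $mq(\Z_p^2)$: it is the number of triples $(\varphi,\psi,c)$ with $\varphi,\psi\in GL_2(\F_p)$ commuting and $c\in\F_p^2$, taken modulo the equivalence $(\varphi,\psi,c)\sim(\theta\varphi\theta^{-1},\theta\psi\theta^{-1},\theta(c)+(1-\theta\varphi\theta^{-1}-\theta\psi\theta^{-1})(e))$, where $\theta$ ranges over $GL_2(\F_p)$ and $e$ over $\F_p^2$. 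I would split $mq(\Z_p^2)=N_{\mathrm{lin}}+N_{\mathrm{aff}}$, where $N_{\mathrm{lin}}$ counts the quasigroups admitting a form with $c=0$ and $N_{\mathrm{aff}}$ the remaining, ``properly affine'' ones. For $c=0$ the condition on the last coordinate is vacuous (take $e=0$), so $N_{\mathrm{lin}}$ is precisely the number of orbits of $GL_2(\F_p)$ acting by simultaneous conjugation on commuting pairs of invertible matrices.

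To compute $N_{\mathrm{lin}}$, I would classify commuting pairs by the commutative subalgebra $A=\F_p[\varphi,\psi]\subseteq M_2(\F_p)$. A commutative subalgebra of $M_2(\F_p)$ has dimension at most two, so $A$ is either $\F_p$, or one of $\F_{p^2}$, $\F_p\times\F_p$, $\F_p[t]/(t^2)$; in each of the last three cases $\F_p^2$ is the unique faithful two-dimensional $A$-module, so the embedding $A\hookrightarrow M_2(\F_p)$ is determined up to conjugacy. This reduces the classification to four transparent parametrizations: scalar pairs $(aI,bI)$; pairs inside a fixed copy of $\F_{p^2}$, taken modulo the Frobenius acting diagonally; simultaneously diagonalized pairs $(\mathrm{diag}(\alpha_1,\alpha_2),\mathrm{diag}(\beta_1,\beta_2))$, taken modulo the coordinate swap; and pairs in $\{xI+yn:x\ne 0\}$ (with $n=\m0100$), taken modulo $(b,b')\mapsto(\lambda b,\lambda b')$. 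Counting each — removing the dimension-one degenerations, and passing carefully between ordered and unordered data — yields $N_{\mathrm{lin}}=p^4-p^3+p^2-3p+2$, the two semisimple cases contributing $p^2(p-1)^2$ together.

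For $N_{\mathrm{aff}}$, observe that a form with $c\ne 0$ is equivalent to one with $c=0$ exactly when $c\in\mathrm{Im}(1-\varphi-\psi)$, so only pairs with $1-\varphi-\psi$ singular contribute. As commuting matrices are simultaneously triangularizable with matched eigenvalues $(\alpha_i,\beta_i)$, singularity of $1-\varphi-\psi$ means $\alpha_i+\beta_i=1$ for some $i$. For each conjugacy class of such a pair I would compute $H=\mathrm{Im}(1-\varphi-\psi)$ (of rank $0$ or $1$), the stabilizer $S\le GL_2(\F_p)$ (which preserves $H$), and the number of $S$-orbits on $\F_p^2/H$; the orbit of $0$ recovers the linear quasigroup already counted, so the pair contributes one less than that number to $N_{\mathrm{aff}}$. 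Running through the four algebra types — scalar pairs with $a+b=1$; $\F_{p^2}$-pairs with $\alpha+\beta=1$ (where $1-\varphi-\psi$ is forced to vanish); diagonal pairs, split according to whether one or both matched eigenvalue-sums equal $1$ (the ``both'' case, with $S$ the diagonal torus acting on $\F_p^2$ with four orbits, produces most of the properly affine quasigroups); and nilpotent-type pairs with $a+a'=1$, split on whether $b+b'=0$ — gives $N_{\mathrm{aff}}=p^3-2p^2+2p-3$. Adding the two, $mq(\Z_p^2)=(p^4-p^3+p^2-3p+2)+(p^3-2p^2+2p-3)=p^4-p^2-p-1$.

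The principal obstacle is the bookkeeping. In the matrix classification one must avoid double-counting the pairs in which $\varphi$ or $\psi$ is scalar (these lie in several of the naive parametrizations at once) and correctly handle the ordered-versus-unordered identifications — Frobenius orbits, coordinate swaps, the $\mathbb{P}^1(\F_p)$ of nilpotent directions. In the orbit count for $N_{\mathrm{aff}}$ one must pin down the stabilizer actions on the quotients $\F_p^2/H$ exactly. A secondary point to settle is that the isomorphism criterion holds in precisely the form used above — in particular, that every isomorphism between quasigroups affine over abelian groups is an affine map of the underlying groups.
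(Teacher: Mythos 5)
Your proposal is correct, and all of its intermediate totals check out against the paper's computation: the number of simultaneous conjugacy classes of commuting pairs in $GL(2,p)$ is indeed $(p-1)^2(p^2+p+2)=p^4-p^3+p^2-3p+2$, your four-way orbit analysis for the properly affine forms sums to $p^3-2p^2+2p-3$, and the individual case contributions (e.g.\ $3{p-2\choose 2}$ from the split semisimple pairs with both eigenvalue-sums equal to $1$, $2(p-2)$ from the nilpotent pairs with $b+b'=0$, $\frac12(p^2-p)$ from the field case) match the entries of the paper's Table \ref{tab:main} after regrouping. The route is genuinely different in organization, though it rests on the same foundation: the isomorphism criterion you flag at the end (isomorphisms of quasigroups affine over $G$ are affine maps $x\mapsto\theta(x)+e$) is exactly the content of Theorem \ref{Th:Alg}, quoted from Dr\'apal and \cite{SV}, so it is available off the shelf and your equivalence relation on triples $(\varphi,\psi,c)$ is the one the paper uses. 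Where you diverge is the decomposition: the paper enumerates lexicographically --- conjugacy classes of $\varphi$ (Table \ref{tab:X}), then conjugacy classes of $\psi$ in $C(\varphi)$, then orbits of $c$ --- whereas you classify commuting pairs symmetrically by the subalgebra $\F_p[\varphi,\psi]$ and split the answer into $N_{\mathrm{lin}}+N_{\mathrm{aff}}$. Your version is cleaner conceptually (it treats $\varphi$ and $\psi$ on an equal footing and isolates the count of commuting invertible pairs up to simultaneous conjugacy as a quantity of independent interest), at the cost of having to control the normalizer actions (Frobenius, coordinate swap, scaling of the nilpotent direction) by hand and to check they act freely on the non-degenerate pairs --- which they do, since the fixed pairs are exactly the excluded scalar ones. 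The paper's nested version avoids that bookkeeping because Theorem \ref{Th:Alg} packages it into the choice of $Y_\varphi$, and it has the side benefit of producing the explicit table of affine forms.
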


\begin{corollary}
$mq(p^2)=mq(\Z_{p^2})+mq(\Z_p^2)=2p^4-p^3-p^2-3p-1$ for every prime $p$.
\end{corollary}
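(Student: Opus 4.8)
The corollary is immediate from the theorem: as recalled in Section~\ref{sec:intro}, $mq(p^2)=\sum_G mq(G)$ where $G$ runs over the two abelian groups of order $p^2$, so it suffices to add $mq(\Z_{p^2})=p^4-p^3-2p$ to the value of $mq(\Z_p^2)$ supplied by Theorem~\ref{Th:Zp2}. Thus the real work is Theorem~\ref{Th:Zp2}, which I would approach as follows. By the Toyoda--Bruck theorem, the medial quasigroups admitting an affine form over $\Z_p^2$ are exactly the $\Q(\Z_p^2,+,\varphi,\psi,c)$ with $\varphi,\psi$ commuting elements of $\aut{\Z_p^2}=\mathrm{GL}(2,\F_p)$ and $c\in\F_p^2$. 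I would then invoke the standard isomorphism criterion for affine quasigroups over a fixed abelian group (see \cite{Sta-latin,SV}): $\Q(\Z_p^2,+,\varphi,\psi,c)$ and $\Q(\Z_p^2,+,\varphi',\psi',c')$ are isomorphic if and only if there is $\theta\in\mathrm{GL}(2,\F_p)$ with $\theta\varphi\theta^{-1}=\varphi'$, $\theta\psi\theta^{-1}=\psi'$ and $\theta(c)-c'\in\img{1-\varphi'-\psi'}$ (the last condition arising because composing with a translation $x\mapsto x+a$ shifts $c$ inside the coset $c+\img{1-\varphi-\psi}$, while group automorphisms conjugate the pair). Writing $C(\varphi,\psi)$ for the common centralizer of $\varphi$ and $\psi$ in $\mathrm{GL}(2,\F_p)$, this reduces the count to
\begin{displaymath}
  mq(\Z_p^2)=\sum_{[(\varphi,\psi)]}\#\bigl(\text{orbits of }C(\varphi,\psi)\ \text{on}\ \F_p^2/\img{1-\varphi-\psi}\bigr),
\end{displaymath}
the sum ranging over representatives of the simultaneous-conjugacy classes of commuting pairs in $\mathrm{GL}(2,\F_p)$.

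Next I would classify those pairs. The subalgebra $\F_p[\varphi,\psi]\subseteq M_2(\F_p)$ is commutative, hence $1$- or $2$-dimensional over $\F_p$, and up to conjugacy it is one of: the scalars $\F_p\cdot I$ ($\varphi,\psi$ both scalar); the diagonal matrices $\cong\F_p\times\F_p$ ($\varphi,\psi$ simultaneously diagonalizable but not both scalar); the field $\F_{p^2}$; or the local ring $\F_p[t]/(t^2)\cong\bigl\{\m a b 0 a\bigr\}$. In each of the four cases the tasks are: (i) enumerate the simultaneous-conjugacy classes of admissible (invertible, commuting, generating that algebra) pairs --- for instance, in the field case the pairs are the multiplications by $(\alpha,\beta)$ with $\alpha,\beta\in\F_{p^2}^\times$ not both in $\F_p$, taken modulo the Frobenius action $(\alpha,\beta)\mapsto(\alpha^p,\beta^p)$; in the diagonal case they are the unordered pairs of distinct elements of $(\F_p^\times)^2$; in the local case they are given by the two eigenvalues together with a point of $\mathbb{P}^1(\F_p)$; (ii) determine $C(\varphi,\psi)$, which equals $\F_p[\varphi]^\times$ whenever $\varphi$ is non-scalar, and is $\mathrm{GL}(2,\F_p)$ or a diagonal torus in the scalar subcases; (iii) compute the rank of $1-\varphi-\psi$ (it drops exactly when $\alpha+\beta=1$, resp.\ some $a_i+b_i=1$) and the number of $C(\varphi,\psi)$-orbits on the cokernel, which is $1$ when $1-\varphi-\psi$ is invertible and otherwise a small integer ($2$, $3$, or $4$), read off from the action of the centralizer on $\F_p$ or $\F_p^2$. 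Adding the four contributions --- roughly $p^2-p-1$ from the scalar case, a binomial expression near $\binom{(p-1)^2}{2}$ from the diagonal case, $\tfrac12\bigl((p^2-1)^2+p-1\bigr)$ from the field case, and $p^3-p-3$ from the local case --- the total collapses to $p^4-p^2-p-1$. A handful of subcases are vacuous for $p\le 3$, but the closed-form counts stay valid, so no separate small-prime argument is needed beyond noting this.

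The conceptual input is light; the difficulty is almost entirely the bookkeeping, and I expect the main obstacle to lie in two places. First, avoiding double-counting across the ``scalar'' boundary: in the field case one must remember that $\varphi$ itself may be scalar while $\psi$ generates $\F_{p^2}$, and analogous care is needed in the diagonal and local cases, where the ``$\varphi$ scalar'' pairs must be assigned to the $2$-dimensional algebra, not to the scalar case. Second, getting the cokernel-orbit counts right in the degenerate subcases, where $C(\varphi,\psi)$ can be as large as all of $\mathrm{GL}(2,\F_p)$ while the cokernel is only $\F_p$ or $\F_p^2$, so that the orbit number is small but subcase-dependent.
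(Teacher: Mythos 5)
Your treatment of the corollary itself is exactly the paper's: it is an immediate sum, using the multiplicativity/decomposition $mq(p^2)=mq(\Z_{p^2})+mq(\Z_p^2)$ recalled in the introduction, the known value $mq(\Z_{p^2})=p^4-p^3-2p$, and Theorem~\ref{Th:Zp2}. Your sketch of the real work, Theorem~\ref{Th:Zp2}, is correct in substance but organized differently from the paper. The paper applies Dr\'apal's theorem (Theorem~\ref{Th:Alg}) in two stages: first a set $X$ of conjugacy class representatives of $\varphi$ in $GL(2,p)$ (the four types of Table~\ref{tab:X}), then conjugacy class representatives of $\psi$ inside $C(\varphi)$, and finally orbits of $C(\varphi)\cap C(\psi)$ on the cokernel of $1-\varphi-\psi$. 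You instead classify commuting pairs by the commutative subalgebra $\F_p[\varphi,\psi]\subseteq M_2(\F_p)$ they generate (scalars, split torus, $\F_{p^2}$, local ring), which partitions the same set of simultaneous-conjugacy classes along different lines; for instance the paper's subcase ``$\varphi$ scalar, $\psi$ with irreducible characteristic polynomial'' lands in your field case. Both routes rest on the same isomorphism criterion and the same cokernel-orbit counts, and your spot checks (e.g.\ $p^2-p-1$ for the all-scalar pairs) agree with the paper's table; your organization arguably makes the ``which algebra does the pair generate'' structure more transparent, at the cost of the boundary bookkeeping you correctly flag, while the paper's staged version lets it reuse the known conjugacy classification of $GL(2,p)$ and the commutativity of the three non-scalar centralizers to take $Y_\varphi=C(\varphi)$ outright.
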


The proof of Theorem \ref{Th:Zp2} occupies the whole Section 2. The affine forms of the quasigroups are explicitly expressed in Table \ref{tab:main}.
Our formula agrees with the computer calculations of \cite{SV} which presents enumeration of all quasigroups affine over an abelian group of order $<64$ (of order $<128$ with a few exceptions).

An important special case, the \emph{idempotent} medial quasigroups (or \emph{latin affine quandles}, in the quandle terminology \cite{HSV}), has been studied earlier extensively. The enumeration problem is significantly simpler, since the parameters in the affine form can be taken $c=0$ and $\psi=id-\varphi$. Therefore, the enumeration of idempotent medial quasigroups up to isomorphism reduces to the enumeration of fixpoint free automorphisms of abelian groups up to conjugacy (cf. Theorem \ref{Th:Alg}). The strongest results were obtained by Hou \cite{Hou}, providing explicit formulas for orders $p^k$ with $k\leq4$. More information about the idempotent case can be found also in \cite{SiS}. 


\section{Proof of Theorem \ref{Th:Zp2}}

We will follow the enumeration procedure described in detail in \cite{SV}. It is based on the following theorem, originally proposed by Dr\'apal \cite{Dra}.

\begin{theorem}[{{\cite[Theorem 3.2]{Dra}}, \cite[Theorem 2.5]{SV}}]\label{Th:Alg}
Let $G$ be an abelian group. The isomorphism classes of medial quasigroups affine over $G$ are in one-to-one correspondence with the elements of the set
\begin{displaymath}
    \{(\varphi,\psi,c):\varphi\in X,\,\psi\in Y_\varphi,\,c\in G_{\varphi,\psi}\},
\end{displaymath}
where 
\begin{itemize}
	\item $X$ is a set of conjugacy class representatives of the group $\aut{G}$;
	\item $Y_\varphi$ is a set of conjugacy class representatives of the centralizer subgroup $C_\aut{G}(\varphi)$, for every $\varphi\in X$ 
	(here we consider conjugation inside the group $C_\aut{G}(\varphi)$, not conjugation by all elements of $\aut G$);
	\item $G_{\varphi,\psi}$ is a set of orbit representatives of the natural action of $C_\aut{G}(\varphi)\cap C_\aut{G}(\psi)$ on $G/\img{1-\varphi-\psi}$.
\end{itemize}
\end{theorem}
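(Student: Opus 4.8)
The plan is to reduce the statement to an explicit isomorphism criterion for affine quasigroups over a fixed group $G$, and then to read off the three-layered conjugacy/orbit structure. By the Toyoda--Bruck theorem quoted above, every medial quasigroup affine over $G$ has a form $\Q(G,+,\varphi,\psi,c)$ with commuting $\varphi,\psi\in\aut G$, and conversely every such form (with $\varphi\psi=\psi\varphi$) is medial; so it suffices to decide when $\Q(G,+,\varphi,\psi,c)\cong\Q(G,+,\varphi',\psi',c')$.

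The crucial step, and the one I expect to be the main obstacle, is to prove that every isomorphism $f$ between two affine quasigroups over $G$ is an \emph{affine} map of the group, $f(x)=\theta(x)+b$ with $\theta\in\aut G$ and $b\in G$. I would argue through the translations. The right translation $R_a(x)=\varphi(x)+\psi(a)+c$ is affine, and a direct computation gives $R_aR_b^{-1}(x)=x+\psi(a-b)$; since $\psi\in\aut G$, these products range over all translations $x\mapsto x+d$ of $(G,+)$, so the group $T$ they generate is exactly the translation group of $G$ --- the same subgroup of $\mathrm{Sym}(G)$ for both quasigroups. Any isomorphism obeys $fR_af^{-1}=R_{f(a)}$ in the target, hence $fTf^{-1}=T$. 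Writing $ft_df^{-1}=t_{d'}$ for $t_d(x)=x+d$ shows that $f(u+d)-f(u)$ is independent of $u$; consequently $g(x):=f(x)-f(0)$ is additive, and being bijective it is an automorphism $\theta$, giving $f(x)=\theta(x)+b$ with $b=f(0)$.

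Once affineness is established, I would substitute $f(x)=\theta(x)+b$ into $f(x*_Qy)=f(x)*_{Q'}f(y)$ and compare, over the abelian group $G$, the part linear in $x$, the part linear in $y$, and the constant term. This yields precisely
\[
\varphi'=\theta\varphi\theta^{-1},\qquad \psi'=\theta\psi\theta^{-1},\qquad c'=\theta(c)+(1-\varphi'-\psi')(b).
\]
Thus the two quasigroups are isomorphic iff such $\theta,b$ exist. The first equation says $\varphi,\varphi'$ are conjugate in $\aut G$, which is why $\varphi$ may be normalized to a representative in $X$; fixing $\varphi=\varphi'$ forces $\theta\in C_\aut{G}(\varphi)$, so the second equation says $\psi,\psi'$ are conjugate inside $C_\aut{G}(\varphi)$, normalizing $\psi$ to a representative in $Y_\varphi$; fixing $\psi=\psi'$ as well forces $\theta\in C_\aut{G}(\varphi)\cap C_\aut{G}(\psi)$, which commutes with $1-\varphi-\psi$ and hence acts on $G/\img{1-\varphi-\psi}$, while $(1-\varphi-\psi)(b)$ sweeps out all of $\img{1-\varphi-\psi}$; so the third equation says exactly that $c,c'$ lie in one orbit of that action, normalizing $c$ to a representative in $G_{\varphi,\psi}$.

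Finally I would assemble existence and uniqueness in this nested order. For existence, start from any affine form with commuting $\varphi_0,\psi_0$, conjugate $\varphi_0$ into $X$ (conjugation preserves commuting, so the transformed second component still centralizes the new first one), then conjugate that component into $Y_\varphi$ by an element of $C_\aut{G}(\varphi)$ (which leaves $\varphi$ fixed), and finally replace $c$ by its orbit representative in $G_{\varphi,\psi}$. For uniqueness, apply the three conditions in the same order: matching $X$ forces $\varphi=\varphi'$, matching $Y_\varphi$ then forces $\psi=\psi'$, and matching $G_{\varphi,\psi}$ then forces $c=c'$. The only point requiring care beyond the affineness lemma is the well-definedness of the action of $C_\aut{G}(\varphi)\cap C_\aut{G}(\psi)$ on the quotient $G/\img{1-\varphi-\psi}$, which follows because such $\theta$ commute with $1-\varphi-\psi$; the rest is the routine coefficient comparison and bookkeeping.
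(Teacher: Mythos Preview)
The paper does not actually prove this theorem; it is quoted from \cite{Dra} and \cite{SV} and then immediately applied to $G=\Z_p^2$. Your argument is correct and is, as far as one can tell from the citations, the standard one: identify the products $R_aR_b^{-1}$ of right translations with the full translation group of $(G,+)$, conclude that any quasigroup isomorphism normalizes this group and is therefore affine, compare coefficients to obtain $\varphi'=\theta\varphi\theta^{-1}$, $\psi'=\theta\psi\theta^{-1}$, $c'=\theta(c)+(1-\varphi'-\psi')(b)$, and then read off the three nested invariants $X$, $Y_\varphi$, $G_{\varphi,\psi}$. The only point worth making explicit, which you do handle, is that after fixing $\varphi=\varphi'$ and $\psi=\psi'$ the element $\theta\in C_\aut{G}(\varphi)\cap C_\aut{G}(\psi)$ commutes with $1-\varphi-\psi$, so its action on $G/\img{1-\varphi-\psi}$ is well defined; with that in place the third condition is exactly an orbit condition on the quotient, as claimed.
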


Indeed, a triple $(\varphi,\psi,c)$ corresponds to the quasigroup $\Q(G,\varphi,\psi,c)$, hence, an explicit construction of the sets $X,Y_\varphi,G_{\varphi,\psi}$ provides an explicit construction of the quasigroups.

In the rest of the section, we apply Theorem \ref{Th:Alg} on the group $G=\Z_p^2$. We will identify automorphisms with their matrices, considering $\aut{G}=GL(2,p)$. 
Most of the proof is a bit sketchy and many sentences could have started with the ``it is easy to check that" statement; yet we think that adding more details would not improve readability of the proof.

\begin{table}[ht]
\[
\begin{array}{|l|l|} \hline
\varphi & C(\varphi) \\\hline
\mm a00a,\ a\neq 0 & GL(2,p) \\\hline
\mm a00b,\ 0<a<b & \left\{\mm u00v:\ u,v\neq 0\right\} \\\hline
\mm a10a,\ a\neq 0 & \left\{\mm uv0u:\ u\neq 0\right\} \\\hline
\mm 01ab,\ x^2-bx-a \text{ irreducible} & \left\{\mm uv{av}{u+bv}:\ u\neq0\text{ or }v\neq 0\right\}.\\\hline
\end{array}
\]
\caption{Conjugacy class representatives in $GL(2,p)$ and their centralizer subgroups.}
\label{tab:X}
\end{table}

\begin{proof}[Proof of Theorem \ref{Th:Zp2}]
Let $G=\Z_p^2$. 
The set $X$ of conjugacy class representatives in $\aut{G}=GL(2,p)$ can be chosen as in Table \ref{tab:X}. The four types of representatives correspond to the diagonalizable matrices with one eigenvalue, the diagonalizable matrices with two distinct eigenvalues, the non-diagonalizable matrices with an eigenvalue in $\F_p$, and the non-diagonalizable matrices with eigenvalues in the quadratic extension, respectively. The last case is represented by matrices $\m 01ab$ such that the polynomial $x^2-bx-a$ is irreducible over $\F_p$. 

The centralizer subgroups are also displayed in Table \ref{tab:X} (here and later on, we will omit the index in the centralizer notation).
In the first case, we can take $Y_\varphi=X$ and we have $C(\varphi)\cap C(\psi)=C(\psi)$ for every $\psi\in Y_\varphi$. 
In the remaining three cases, the key observation is that the centralizer subgroups are commutative, hence we can take $Y_\varphi=C(\varphi)$, and we have $C(\varphi)\cap C(\psi)=C(\varphi)$ for every $\psi\in Y_\varphi$. 

The size of $G_{\varphi,\psi}$ will be determined by the following procedure:
If $1-\varphi-\psi$ is a regular matrix, then $|G/\img{1-\varphi-\psi}|=1$, and thus also $|G_{\varphi,\psi}|=1$.
If the rank of the matrix $1-\varphi-\psi$ is one, then $G/\img{1-\varphi-\psi}\simeq\Z_p$, and since all of the centralizer subgroups contain all scalar matrices $\m u00u$, we can always take $G_{\varphi,\psi}=\{\mathbf 0,\mathbf w\}$ where $\mathbf w$ is any non-zero vector.
If the rank of the matrix $1-\varphi-\psi$ is zero, then $G/\img{1-\varphi-\psi}\simeq G$, and the situation depends on $C(\varphi)\cap C(\psi)$, to be discussed below in each particular case.

The results are summarized in Table \ref{tab:main}. Below we give comments on how the table is calculated.

\begin{table}[ht]
\[
\begin{array}{|c|c|l|l|} \hline
\varphi & \psi & c & \text{number} \\\hline
\mm a00a & \mm u00u & \vv00 \quad\text{if } u\neq 1-a & p^2-3p+3 \\\cline{3-4}
a\neq 0 & u\neq 0 & \vv00,\vv10 \quad\text{if } u=1-a & 2(p-2) \\\cline{2-4}
 & \mm u00v & \vv00 \quad\text{if } u,v\neq 1-a & \frac12(p-2)(p^2-4p+5) \\\cline{3-4}
 & 0<u<v& \vv00,\vv10 \quad\text{if } u=1-a \text{ or } v=1-a & 2(p-2)^2 \\\cline{2-4}
 & \mm u10u & \vv00 \quad\text{if } u\neq 1-a & p^2-3p+3 \\\cline{3-4}
 & u\neq 0 & \vv00,\vv10 \quad\text{if } u=1-a & 2(p-2) \\\cline{2-4}
 & \mm 01uv & \vv00 & \frac12 p(p-1)^2 \\
 & x^2-vx-u \text{ irr.} && \\\hline

\mm a00b & \mm u00v & \vv00 \quad\text{if } u\neq 1-a,\ v\neq 1-b & \frac12(p-2)^2(p^2-3p+4) \\\cline{3-4}
0<a<b & u,v\neq 0 & \vv00,\vv10 \quad\text{if } \begin{cases} u=1-a,\ v\neq 1-b \\ u\neq 1-a,\ v=1-b\end{cases} & 2(p-2)(p^2-4p+5) \\\cline{3-4}
 && \vv00,\vv10,\vv01,\vv11 \ \text{if } (u,v)=(1-a,1-b) & 2(p-2)(p-3) \\\hline

\mm a10a & \mm uv0u & \vv00 \quad\text{if } u\neq 1-a & p(p^2-3p+3) \\\cline{3-4}
a\neq0 & u\neq 0 & \vv00,\vv10 \quad\text{if } u=1-a,\ v\neq-1 & 2(p-1)(p-2) \\\cline{3-4}
 && \vv00,\vv10,\vv01 \quad\text{if } u=1-a,\ v=-1 & 3(p-2) \\\hline

\mm 01ab & \mm uv{av}{u+bv} & \vv00 \quad\text{if } (u,v)\neq(1,-1) & \frac12(p^2-p)(p^2-2) \\\cline{3-4}
x^2-bx-a \text{ irr.} & u\neq0\text{ or }v\neq 0 & \vv00,\vv10 \quad\text{if } u=1,\ v=-1 & p^2-p \\\hline 
\end{array}
\]
\caption{Affine forms of medial quasigroups over the group $\Z_p^2$, up to isomorphism.}
\label{tab:main}
\end{table}

\emph{Case $\varphi=\m a00a$.}
Take $Y_\varphi=X$. 

\emph{Subcase $\psi=\m u00u$.} The matrix $1-\varphi-\psi$ is singular iff $u=1-a$. There are $p-2$ such pairs $(\varphi,\psi)$, and since $C(\varphi)\cap C(\psi)=GL(2,p)$, we can choose $G_{\varphi,\psi}=\{\mathbf 0,\mathbf w\}$ with any $\mathbf w\neq 0$. In the remaining $(p-1)^2-(p-2)=p^2-3p+3$ cases, the matrix is regular and $|G_{\varphi,\psi}|=1$.

\emph{Subcase $\psi=\m u00v$.} The matrix $1-\varphi-\psi$ is singular iff $u=1-a$ or $v=1-a$ (we cannot have both at the same time, since $u\neq v$). There are $(p-2)^2$ such pairs $(\varphi,\psi)$, and since the rank of $1-\varphi-\psi$ is one, we have $|G_{\varphi,\psi}|=2$. In the remaining $(p-1)\cdot{p-1\choose 2}-(p-2)^2=\frac12(p-2)(p^2-4p+5)$ cases, the matrix is regular and $|G_{\varphi,\psi}|=1$.

\emph{Subcase $\psi=\m u10u$.} The matrix $1-\varphi-\psi$ is singular iff $u=1-a$. There are $p-2$ such pairs $(\varphi,\psi)$, and since the rank of $1-\varphi-\psi$ is one, we have $|G_{\varphi,\psi}|=2$. In the remaining $(p-1)^2-(p-2)=p^2-3p+3$ cases, the matrix is regular and $|G_{\varphi,\psi}|=1$.

\emph{Subcase $\psi=\m 01uv$.} The matrix $1-\varphi-\psi$ is always regular. Since there are precisely $\frac12(p^2-p)$ irreducible polynomials of degree 2 over $\F_p$, this case contributes $\frac12 p(p-1)^2$ triples $(\varphi,\psi,c)$.

\emph{Case $\varphi=\m a00b$.}
Take $Y_\varphi=C(\varphi)$, the subgroup of diagonal matrices. The total number of pairs $(\varphi,\psi)$ is ${p-1\choose2}(p-1)^2$. For $\psi=\m u00v$, the rank of $1-\varphi-\psi$ is
\begin{itemize}
	\item zero iff $u=1-a$ and $v=1-b$; there are ${p-2\choose 2}$ such pairs $(\varphi,\psi)$, each with $|G_{\varphi,\psi}|=4$, since there are four orbits of the action of $C(\varphi)$ on $\Z_p^2$;
	\item one iff $u=1-a$ or $v\neq 1-b$, or $u\neq 1-a$ and $v=1-b$; for $a=1$, there are $p-2$ choices of $b$, $p-1$ choices of $u$ and one choice of $v$; for $a\neq 1$, there are ${p-2\choose 2}$ choices of $\varphi$, and for each of them $2p-4$ choices of $\psi$; in total, we have $(p-2)(p-1)+{p-2\choose 2}(2p-4)=(p-2)(p^2-4p+5)$ such pairs $(\varphi,\psi)$, each with $|G_{\varphi,\psi}|=2$;
	\item two iff $u\neq 1-a$ and $v\neq 1-b$; these are the remaining pairs $(\varphi,\psi)$, hence, there is ${p-1\choose2}(p-1)^2-{p-2\choose 2}-(p-2)(p^2-4p+5)=\frac12(p-2)^2(p^2-3p+4)$ of them, each with $|G_{\varphi,\psi}|=1$.
\end{itemize}
 
\emph{Case $\varphi=\m a10a$.}
Take $Y_\varphi=C(\varphi)=\{\m uv0u:u\neq0\}$. The total number of pairs $(\varphi,\psi)$ is $p(p-1)^2$. For $\psi=\m uv0u$, the rank of $1-\varphi-\psi$ is
\begin{itemize}
	\item zero iff $u=1-a$ and $v=-1$; there are $p-2$ such pairs $(\varphi,\psi)$, each with $|G_{\varphi,\psi}|=3$, since there are three orbits of the action of $C(\varphi)$ on $\Z_p^2$;
	\item one iff $u=1-a$ or $v\neq -1$; there are $(p-2)(p-1)$ such pairs $(\varphi,\psi)$, each with $|G_{\varphi,\psi}|=2$;
	\item two iff $u\neq 1-a$; these are the remaining pairs $(\varphi,\psi)$, hence, there is $p(p-1)^2-(p-2)-(p-2)(p-1)=p(p^2-3p+3)$ of them, each with $|G_{\varphi,\psi}|=1$.
\end{itemize}
 
\emph{Case $\varphi=\m 01ab$.}
Take $Y_\varphi=C(\varphi)$. For $\psi=\m uv{av}{u+bv}\in C(\varphi)$, the determinant of the matrix $1-\varphi-\psi$ is $(1-u)^2-b(1-u)(1+v)-a(1+v)^2$. Assume the determinant is 0. Then either $1+v=0$, and thus also $1-u=0$, or we can divide by $(1+v)^2$ and obtain the equation $(\frac{1-u}{1-v})^2-b\frac{1-u}{1+v}-a=0$, which has no solution, because the polynomial $x^2-bx-a$ is irreducible. Therefore, the matrix $1-\varphi-\psi$ is singular if and only $u=1$ and $v=-1$. Since $C(\varphi)$ acts transitively on $\Z_p^2-\{\mathbf 0\}$, we have $|G_{\varphi,\psi}|=2$. There are $\frac12(p^2-p)$ irreducible polynomials of degree 2, thus the singular case contributes $p^2-p$ triples. The regular case contributes $\frac12(p^2-p)(p^2-2)$ triples.

Summing up all the contributions (see the last column of Table \ref{tab:main}), we obtain that the total number is $p^4-p^2-p-1$. 
\end{proof}

\section{Concluding remarks}

In \cite[Problem 3.4]{SV}, we asked to calculate $mq(\Z_p^k)$ for any $p,k$.
In theory, using Macdonald's classification of conjugacy classes in general linear groups \cite{Mac}, one could continue in the fashion of Section 2 to higher dimensions. But the complexity of such calculations would grow rapidly.
As an alternative, we propose the following idea.

\begin{conjecture} Let $k$ be any natural number.
\begin{enumerate}
	\item There is an integer polynomial $f_k$ of degree $2k$ such that $mq(\Z_p^k)=f_k(p)$ for every prime $p$.
	\item There is an integer polynomial $g_k$ of degree $2k$ such that $mq(p^k)=g_k(p)$ for every prime $p$.
\end{enumerate}
\end{conjecture}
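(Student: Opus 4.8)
The plan is to reduce part (2) to part (1) and to a per-partition version of it, and to prove polynomiality by reformulating $mq$ as a single orbit count to which uniform point-counting can be applied. Both quantities are orbit counts of an affine group: over $G=\Z_p^k$, two affine forms $(\varphi,\psi,c)$ and $(\varphi',\psi',c')$ give isomorphic quasigroups exactly when they lie in one orbit of $\mathrm{AGL}(k,p)=GL(k,p)\ltimes\Z_p^k$ acting on the commuting set $S_k=\{(\varphi,\psi,c):\varphi\psi=\psi\varphi\}$ by simultaneous conjugation on $(\varphi,\psi)$ together with the induced map $c\mapsto\theta c+(1-\theta\varphi\theta^{-1}-\theta\psi\theta^{-1})d$ for $(\theta,d)\in\mathrm{AGL}(k,p)$; Theorem \ref{Th:Alg} is precisely a system of orbit representatives for this action, so $mq(\Z_p^k)=\#\big(S_k/\mathrm{AGL}(k,p)\big)$. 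For part (2), multiplicativity (Section \ref{sec:intro}) reduces everything to prime powers, and $mq(p^k)=\sum_{\lambda\vdash k}mq(\Z_p^{\lambda})$ is a sum over the finitely many partitions of $k$, where $\Z_p^\lambda=\bigoplus_i\Z_{p^{\lambda_i}}$ and the same orbit-count description holds with $GL(k,p)$ replaced by $\aut{\Z_p^\lambda}$. Thus it suffices to prove, for each fixed $\lambda$, that $mq(\Z_p^\lambda)$ agrees with an integer polynomial $h_\lambda$ of degree $\le 2|\lambda|$; then part (1) is $f_k=h_{(1^k)}$ and $g_k=\sum_{\lambda\vdash k}h_\lambda$.

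The combinatorial skeleton is $p$-independent. Conjugacy classes in $GL(k,p)$ are indexed (rational canonical form / Macdonald \cite{Mac}) by assignments of partitions to monic irreducibles, organized into finitely many \emph{shapes} recording the multiset of (degree, partition) data; the number of classes of a given shape is a product of necklace polynomials $N_d(p)=\frac1d\sum_{e\mid d}\mu(d/e)p^e$, hence an integer polynomial in $p$. For $\varphi$ of a fixed shape, $C(\varphi)$ is, up to isomorphism, a shape-determined product of centralizers of unipotent elements in general linear groups over extensions $\F_{p^d}$, with polynomial order and $p$-uniform action on $\Z_p^k$. Following the case $k=2$ above literally, one would then count $\psi$ up to $C(\varphi)$-conjugacy and count orbits of $C(\varphi)\cap C(\psi)$ on $\Z_p^k/\img{1-\varphi-\psi}$, each by Burnside, turning both into sums of terms $p^{\dim\mathrm{Fix}}$ governed by shape data. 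This is how I would verify polynomiality and compute $h_\lambda$ explicitly for any prescribed $\lambda$ and small $k$.

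The literal stratification, however, runs into a genuine difficulty: counting $\psi\in C(\varphi)$ up to conjugacy is the class number $k(C(\varphi))$, and for $\varphi$ with repeated eigenvalues the unipotent part of $C(\varphi)$ makes this a Higman-type quantity, \emph{not} known to be polynomial in $p$ in general. I would therefore avoid intermediate class numbers by a single Burnside over the whole group, $mq(\Z_p^k)=\tfrac{1}{|\mathrm{AGL}(k,p)|}\sum_{g}|\mathrm{Fix}_{S_k}(g)|$, where for $g=(\theta,d)$ the fixed locus consists of commuting \emph{pairs} $(\varphi,\psi)$ inside $C(\theta)$ together with $c$ in an affine fixed space. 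The crucial gain is that one now needs only counts of commuting pairs (not up to conjugacy) weighted by powers of $p$ from cokernel ranks, which is the regime controlled by Feit--Fine-type formulas and, more robustly, by the fact that all strata are $\F_p$-points of schemes defined over $\Z$. The plan is to show each $|\mathrm{Fix}_{(\theta,d)}|$ is polynomial in $p$ (grouping $\theta$ by shape), conclude the numerator is an integer polynomial $N(p)$, and finally argue that $N(p)/|\mathrm{AGL}(k,p)|$ is itself an integer polynomial. The hard part, and the reason this is only a conjecture, is exactly this last transition: establishing polynomial point counts for the fixed loci uniformly in $p$ despite the wild representation type of $\F_p[x,y]$-modules, and upgrading the Burnside quotient from a rational function that is integer-valued at all primes to a genuine integer polynomial. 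A motivic / Lang--Weil certificate that the relevant $\Z$-schemes are of polynomial count, combined with a divisibility argument in $\mathbb Z[p]$, is where new input is needed.

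The degree statement, by contrast, needs none of this and is cleanest in the stratified picture. The dominant contribution comes from \emph{regular} $\varphi$ (distinct eigenvalue-blocks, so $C(\varphi)$ a maximal torus, abelian, of order $\sim p^k$), for which $\psi$ ranges over the $\sim p^k$ elements of that torus while $1-\varphi-\psi$ is generically regular, forcing $|G_{\varphi,\psi}|=1$; summing over all split and non-split torus types gives leading term $p^{2k}$ with coefficient $1$. All stratum contributions are nonnegative, so there is no top-degree cancellation, and the identical dominance argument over $\aut{\Z_p^\lambda}$ yields $\deg h_\lambda=2|\lambda|$ for every $\lambda$ (consistently with the known cyclic formula $mq(\Z_{p^k})=p^{2k}+\dots$). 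Hence $\deg f_k=2k$ and, since every $h_\lambda$ has degree $\le 2k$ with nonnegative leading terms, $\deg g_k=2k$ as well.
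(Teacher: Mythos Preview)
The statement you are addressing is labeled a \emph{Conjecture} in the paper and is left open there; the only remark the paper makes is that, were the conjecture true, the polynomials could be recovered by interpolation from the first $2k+1$ primes. There is therefore no proof in the paper to compare your attempt against.

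Your write-up is not a proof but an outline, and you say so yourself. The reduction of (2) to a sum $\sum_{\lambda\vdash k}h_\lambda(p)$ over partitions and the identification of $mq(\Z_p^k)$ with an $\mathrm{AGL}(k,p)$-orbit count on the set of commuting affine data are both correct and are exactly the content of the Dr\'apal/Stanovsk\'y--Vojt\v echovsk\'y framework the paper uses. Your diagnosis of the obstruction is also accurate: the step that would make the stratified count go through is polynomiality of the class number of $C(\varphi)$, and for $\varphi$ with large unipotent centralizer this is a Higman-type statement with no known uniform proof. Switching to a single Burnside sum over $\mathrm{AGL}(k,p)$ does not dissolve the difficulty: the fixed-point loci you must count are cut out inside commuting-pair varieties together with rank conditions on $1-\varphi-\psi$, and uniform polynomial point counts for such loci over $\F_p$ are not known in general (the wild representation type of $\F_p[x,y]$-modules is precisely the issue). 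The further passage from ``integer-valued at all primes'' to ``integer polynomial'' is also not automatic; a rational function in $p$ can take integer values at all primes without being a polynomial, so a separate divisibility argument really is required.

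Your degree paragraph is a plausible heuristic but is not self-contained. Arguing that the regular-semisimple stratum contributes $\sim p^{2k}$ with coefficient $1$ is fine, but the claim that ``all stratum contributions are nonnegative, so there is no top-degree cancellation'' presupposes the very polynomiality you have not established; without it you only get the asymptotic $mq(\Z_p^k)\sim p^{2k}$, not $\deg f_k=2k$. You also need an upper bound showing that the non-regular strata contribute $O(p^{2k-1})$, which requires controlling the class number of every centralizer --- again the Higman-type input. In short, your outline correctly locates the genuine open problems but does not resolve them, which is consistent with the paper's decision to state this as a conjecture rather than a theorem.
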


If the conjecture was true, one could interpolate the polynomials from the values of $mq(\Z_p^k)$ and $mq(p^k)$ for the first $2k+1$ primes.

\end{document}